\newtheorem{theorem}{Theorem}
\newtheorem{lemma}[theorem]{Lemma}
\begin{document}
\title{Space-Time Finite Element Methods for
Parabolic Evolution Problems
with Non-smooth Solutions\thanks{Supported by the Austrian Science Fund (FWF) under the grant W1214, project DK4.}}
\author{Ulrich Langer\thanks{Institute for Computational Mathematics, Johannes Kepler University Linz.} 
\and
Andreas Schafelner\thanks{Doctoral Program “Computational Mathematics”, Johannes Kepler University Linz.} 
}
\maketitle             
\begin{abstract}
We propose consistent  locally stabilized, conforming finite element schemes on  completely unstructured simplicial space-time meshes 
for the numerical solution of non-autonomous parabolic evolution problems 
under the assumption of maximal parabolic regularity.
We present new a priori estimates for low-regularity solutions.
In order to avoid reduced convergence rates appearing in the case of uniform mesh refinement,
we also consider adaptive refinement procedures based on residual
a posteriori error indicators. 
The huge system of space-time finite element equations is then solved by means of
GMRES preconditioned by algebraic multigrid.\\[1em]
Keywords: Parabolic initial-boundary-value problems; Space-time finite element methods; Unstructured meshes; Adaptivity
\end{abstract}
%
%

\section{Introduction}\label{sec:intro}
Parabolic initial-boundary value problems of the form 
\begin{equation} 
\label{LS:eq:modelproblem}
\partial_{t}u - \mathrm{div}_x(\nu\, \nabla_{x} u ) = f \;\mbox{in}\; Q, 
\quad
u = 0  \;\mbox{on}\; \Sigma, 
\quad
u = u_0  \;\mbox{on}\; \Sigma_0 
\end{equation}
describe not only heat conduction and diffusion processes but also 
2D eddy current problems in electromagnetics and many other 
evolution processes, where $Q = \Omega \times (0,T)$, $\Sigma = \partial \Omega \times (0,T)$,
and $\Sigma_0 = \Omega\times\{0\}$ denote the space-time cylinder, its lateral boundary,
and the bottom face, respectively.
The spatial computational domain $ \Omega \subset \mathbb{R}^d $, $ d = 1,2,3 $, 
is supposed to be bounded and Lipschitz. The final time is denoted by  $T$.
The right-hand side $f$ is a given source function 
from $L_2(Q)$. The given coefficient $\nu$ may depend on 
the spatial variable $x$ as well as the time variable $t$.
In the latter case, the problem is called non-autonomous.
We suppose at least that $\nu$ is uniformly positive and bounded almost everywhere.
We here consider homogeneous Dirichlet boundary conditions for the sake of simplicity.
In practice, we often meet mixed boundary conditions. 
Discontinuous coefficients, non-smooth boundaries, changing boundary conditions,
non-smooth or incompatible initial conditions, and non-smooth right-hand sides 
can lead to non-smooth solutions.

In contrast to the conventional time-stepping methods in combination with 
some spatial discretization method, or the more advanced, but closely related discontinuous Galerkin (dG) 
methods based on time slices, 
we here consider space-time finite element discretizations treating time as just another variable 
and the term $\partial_t u$  in (\ref{LS:eq:modelproblem}) as convection term in time.
Following \cite{LS:LangerNeumuellerSchafelner:2019a}, we derive consistent, locally stabilized, conforming finite element schemes on  completely unstructured simplicial space-time meshes 
under the assumption of maximal parabolic regularity; see, e.g., \cite{LS:Fackler:2017a}.
Unstructured space-time schemes have clear advantages with respect to adaptivity, 
parallelization, and the numerical treatment of moving interfaces or special domains.
We refer the reader to the survey paper
\cite{LS:SteinbachYang:2018a}
that provides an excellent overview of  completely unstructured space-time methods and simultaneous space-time adaptivity.
In particular, we would like to mention the papers 
\cite{LS:Steinbach:2015a} that is based on 
an 
inf-sup-condition,
\cite{LS:DevaudSchwab:2018a} that uses mesh-grading in time,
and 
\cite{LS:BankVassilevskiZikatanov:2016a} 
that also uses stabilization techniques. All three papers treat the autonomous case.

We here present new a priori 
discretization error
estimates for low-regularity solutions.
In order to avoid reduced convergence rates appearing in the case of uniform mesh refinement,
we also consider adaptive refinement procedures 
in the numerical experiments presented in Section~\ref{sec:num}.
The adaptive refinement procedures are based on residual
a posteriori error indicators. 
The huge system of space-time finite element equations is then solved by means of
Generalized Minimal Residual Method (GMRES) preconditioned by an algebraic multigrid cycle. 
In particular, in the 4D space-time 
case that is 3D in space, 
simultaneous space-time adaptivity and  parallelization can considerably reduce
the computational time.
The space-time finite element solver was implemented 
in the framework of MFEM. 
The numerical results nicely confirm our theoretical findings.
The parallel version of the code shows an excellent 
parallel performance.

\section{Weak formulation and maximal parabolic regularity}
%
The weak formulation of the model problem \eqref{LS:eq:modelproblem} reads as follows: find $ u\in H^{1,0}_0(Q):=\{u\in L_2(Q):\nabla_x u\in [L_2(Q)]^d,\,  u=0\ \text{on}\ \Sigma \} $ such that (s.t.)
\begin{equation}\label{LS:eq:weakformulation}
\int_{Q} \bigl( -u\,\partial_{t} v +\nu\, \nabla_{x}u\cdot\nabla_{x}v\bigr)\;\mathrm{d}(x,t) 
= \int_{Q}\!f\,v\;\mathrm{d}(x,t) + \int_{\Omega}\!u_0\,v|_{t=0}\;\mathrm{d}x 
\end{equation}
for all 
$ v \in \hat{H}^{1}_0(Q) = \{ u\in H^1(Q) : u=0\ \text{on}\ \Sigma\cup\Sigma_T \} $, where $ \Sigma_T:=\Omega\times\{T\} $. The existence and uniqueness of weak solutions is well understood; 
see, e.g., \cite{LS:Ladyzhenskaya:1985a}.
It was already shown in \cite{LS:Ladyzhenskaya:1985a} that $\partial_t u \in L^2(Q)$ and 
$\Delta u \in L^2(Q)$ provided that $\nu =1$, $f \in L^2(Q)$, and $u_0 = 0$.
This case is called maximal parabolic regularity. Similar results can be 
obtained under more general assumptions imposed on the data; see, e.g., \cite{LS:Fackler:2017a}
for some very recent results on the non-autonomous case.
%
\section{Locally stabilized space-time finite element\\ methods}
%
In order to derive the space-time finite element scheme, we need an admissible, shape regular 
decomposition $ \mathcal{T}_h = \{K\}$ of the space-time cylinder $ Q = \bigcup_{K\in\mathcal{T}_h}\overline{K}$ 
into finite elements $K$. On $ \mathcal{T}_h$, we define a $H^1$ conforming finite element space 
$V_{h}$ 
by means of polynomial simplicial finite elements of the degree $p$
in the usual way; see, e.g., \cite{LS:BrennerScott:2008a}.
Let us assume that the solution $u$ of (\ref{LS:eq:weakformulation}) belongs to 
the space $V_0 = H^{\mathcal{L},1}_{0,\underline{0}}(\mathcal{T}_h) := \{ u\in L_2(Q) : \partial_t u\in L_2(K),\ \mathcal{L}u:=\mathrm{div}_x(\nu\nabla_{x}u)\in L_2(K)\ \forall K\in\mathcal{T}_h,\ \text{and}\ u|_{\Sigma\cup\Sigma_0}=0 \}$, i.e., we only need some local version of maximal parabolic regularity,
and, for simplicity, we assume homogeneous initial conditions, i.e., $u_0 = 0$.
Multiplying the PDE (\ref{LS:eq:modelproblem}) on $K$ by a local time-upwind test function 
$ v_h + \theta_K h_K\partial_{t}v_h $, 
with $ v_h\in V_{0h} =  \{v_h \in V_{h}: v_h = 0 \, \mbox{on} \, \Sigma \cup \Sigma_0\}$, $ h_K = \mathrm{diam}(K) $, and a parameter $ \theta_K > 0 $  
which we will specify later, integrating over $K$, integrating by parts,
and summing up over all elements  $K\in\mathcal{T}_h$, we arrive at the following consistent 
space-time finite element scheme: find $u_h \in V_{0h}$ s.t.
\begin{equation}\label{eq:fe-scheme}
a_h(u_h,v_h) = l_h(v_h),\quad \forall v_h\in V_{0h},
\end{equation}
with
\begin{align}
a_h(u_h,v_h) =& \sum_{K\in\mathcal{T}_h} \int_{K} \big[ \partial_t u_h v_h + \theta_K h_K \partial_t u_h \partial_t v_h \label{LS:eq:bilinearform} \\
&\qquad\qquad + \nu \nabla_{x}u_h\cdot\nabla_{x}v_h - \theta_K h_K \mathrm{div}_x(\nu \nabla_{x} u_h)\partial_t v_h \big] \mathrm{d}(x,t),\notag\\
l_h(v_h) =& \sum_{K\in\mathcal{T}_h} \int_{K} f v_h + \theta_K h_K f \partial_t v_h\mathrm{d}(x,t). \label{LS:eq:linearform}
\end{align}
The bilinearform $ a_h(\,.\,,\,.\,) $ is coercive on $ V_{0h}\times V_{0h} $ wrt to the norm
\begin{equation}
\| v \|_{h}^2 = \frac{1}{2}\| v \|_{L_2(\Sigma_T)}^2 + \sum_{K\in\mathcal{T}_h} \Bigl[ \theta_{K} h_K\| \partial_{t} v \|_{L_2(K)}^2 + \| \nabla_x v \|_{L_2^{\nu}(K)}^2 \Bigr], \label{LNS:eq:norm:h}
\end{equation}
i.e.,
$ a_h(v_h,v_h) \ge \mu_c \|v_h\|_h^2$, 
$\forall v_h\in V_{0h}$,
and bounded on $ V_{0h,*}\times V_{0h} $ wrt to the norm
\begin{align}
\| v \|_{h,*}^2 &=
\|v\|_{h}^2 + \sum_{K\in\mathcal{T}_h} \Bigl[  (\theta_{K} h_K)^{-1}\| v \|_{L_2(K)}^2 + \theta_{K} h_K \|\mathrm{div}_x (\nu \nabla_{x} v) \|_{L_2(K)}^2 \Bigr],\label{LNS:eq:norm:h,*}
\end{align}
i.e.,
$a_h(u_h,v_h) \le \mu_b \|u_h\|_{h,*}\|v_h\|_{h}$, $\forall u_h\in V_{0h,*},\forall v_h\in V_{0h}$,
where $ V_{0h,*}:= H^{\mathcal{L},1}_{0,\underline{0}}(Q) + V_{0h} $; see \cite[Lemma 3.8]{LS:LangerNeumuellerSchafelner:2019a} and \cite[Remark 3.13]{LS:LangerNeumuellerSchafelner:2019a}, respectively. The coercivity constant $ \mu_c $ is robust in $ h_K $ 
provided that we choose $ \theta_{K} = \mathcal{O}(h_K) $; see Section~\ref{sec:num} or \cite[Lemma 3.8]{LS:LangerNeumuellerSchafelner:2019a} for the explicit choice.
From the above derivation of the scheme, we  get consistency
$a_h(u,v_h) = l_h(v_h), \; \forall v_h\in V_{0h}$,
provided that the solution $u$ belongs to  $H^{\mathcal{L},1}_{0,\underline{0}}(Q) $
that is ensured in the case of maximal parabolic regularity.
The space-time finite element scheme (\ref{eq:fe-scheme}) 
and the consistency relation 
immediately yield Galerkin orthogonality 
\begin{equation}\label{eq:GalerkinOrthogonality}
a_h(u - u_h,v_h) = 0, \quad \forall v_h\in V_{0h}.
\end{equation}
We deduce that \eqref{eq:fe-scheme} is nothing but a huge linear system of algebraic equations. Indeed, let $ V_{0h} = \mathrm{span}\{ p^{(j)}, j=1,\dots,N_h \} $, where $ \{ p^{(j)}, j=1,\dots,N_h \} $ is the nodal finite element basis and $ N_h $ is the total number of space-time degrees of freedom (dofs). Then we can express each function in $ V_{0h} $ in terms of this basis, i.e., we can identify each finite element function $ v_h\in V_{0h} $ with its coefficient vector $ \mathbf{v}_h\in \mathbb{R}^{N_h} $. Moreover, each basis function $ p^{(j)} $ is also a valid test function. Hence, we obtain $ N_h $ equations from \eqref{eq:fe-scheme}, which we rewrite as a system of linear algebraic equations, i.e.
\[ \mathbf{K}_h\,\mathbf{u}_h = \mathbf{f}_h, \]
with the solution vector
$\mathbf{u}_{h}= (u_j)_{j=1,\ldots,N_h}  $,
the vector
$ \mathbf{f}_{h} = \bigl(l_h(p^{(i)})\bigr)_{i=1,\ldots,N_h}$,
and system matrix
$ \mathbf{K}_{h} = \bigl(a_h(p^{(j)},p^{(i)})\bigr)_{i,j=1,\ldots,N_h} $
that is non-symmetric, but positive definite.
%
\section{A priori discretization error estimates}
%
Galerkin orthogonality (\ref{eq:GalerkinOrthogonality}),
together with coercivity and boundedness, enables us to prove a C\`ea-like estimate, where we bound the discretization error in the $ \|\,.\,\|_h $-norm by the best-approximation error in the $ \|\,.\,\|_{h,*} $-norm.
\begin{lemma}\label{LS:lem:cea-like}
    Let the bilinearform \eqref{LS:eq:bilinearform} be coercive \cite[Lemma 3.8]{LS:LangerNeumuellerSchafelner:2019a} with constant $ \mu_c $ and bounded \cite[Lemma 3.11, Remark 3.13]{LS:LangerNeumuellerSchafelner:2019a} with constant $ \mu_b $, and let $ u\in H^{\mathcal{L},1}_{0,\underline{0}}(\mathcal{T}_h) $ be the solution of the space-time variational problem \eqref{LS:eq:weakformulation}. Then there holds
    \begin{equation}\label{LS:eq:cea-like}
    \|u-u_h\|_h\leq \biggl(1+\frac{\mu_b}{\mu_c}\biggr) \inf_{v_h\in V_{0h}} \|u-v_h\|_{h,*},
    \end{equation}
    where $ u_h\in V_{0h} $ is the solution to the space-time finite element scheme \eqref{eq:fe-scheme}.
\end{lemma}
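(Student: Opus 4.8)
The plan is to follow the classical C\`ea/Strang argument, adapted to the fact that coercivity and boundedness here are measured in two \emph{different} norms. First I would fix an arbitrary $v_h\in V_{0h}$ and split the error by the triangle inequality,
\[
\|u-u_h\|_h \le \|u-v_h\|_h + \|v_h-u_h\|_h,
\]
so that the task reduces to controlling the purely discrete quantity $\|v_h-u_h\|_h$, noting that $v_h-u_h\in V_{0h}$.

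Next I would exploit coercivity on this discrete difference. Since $v_h-u_h\in V_{0h}$, the estimate $a_h(w_h,w_h)\ge\mu_c\|w_h\|_h^2$ gives $\mu_c\|v_h-u_h\|_h^2 \le a_h(v_h-u_h,\,v_h-u_h)$. I would then write $v_h-u_h=(v_h-u)+(u-u_h)$ in the first argument and invoke Galerkin orthogonality \eqref{eq:GalerkinOrthogonality} to annihilate the term $a_h(u-u_h,\,v_h-u_h)$, which is legitimate because the test function $v_h-u_h$ lies in $V_{0h}$. This leaves $\mu_c\|v_h-u_h\|_h^2 \le a_h(v_h-u,\,v_h-u_h)$.

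To the remaining term I would apply the boundedness estimate. The key observation is that $v_h-u\in V_{0h,*}=H^{\mathcal{L},1}_{0,\underline{0}}(Q)+V_{0h}$, since $u$ has (local) maximal parabolic regularity and $v_h\in V_{0h}$; hence $a_h(u-v_h,\,v_h-u_h)\le \mu_b\,\|u-v_h\|_{h,*}\,\|v_h-u_h\|_h$, which yields $\mu_c\|v_h-u_h\|_h^2 \le \mu_b\,\|u-v_h\|_{h,*}\,\|v_h-u_h\|_h$. Dividing by $\|v_h-u_h\|_h$ (the case $v_h=u_h$ being trivial) gives $\|v_h-u_h\|_h \le (\mu_b/\mu_c)\,\|u-v_h\|_{h,*}$.

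Finally I would feed this back into the triangle inequality. Observing from the definition \eqref{LNS:eq:norm:h,*} that $\|\,.\,\|_{h,*}$ dominates $\|\,.\,\|_h$, since it augments the latter by manifestly nonnegative terms, I can bound $\|u-v_h\|_h\le\|u-v_h\|_{h,*}$, so that $\|u-u_h\|_h \le (1+\mu_b/\mu_c)\,\|u-v_h\|_{h,*}$; taking the infimum over $v_h\in V_{0h}$ gives \eqref{LS:eq:cea-like}. I expect the only genuinely delicate point to be the bookkeeping of which norm governs which factor: the argument hinges on the \emph{asymmetry} of the boundedness estimate, so one must keep the best-approximation error $u-v_h$ in the trial-side slot measured by $\|\,.\,\|_{h,*}$ and the discrete factor in the slot measured by $\|\,.\,\|_h$, and verify that $v_h-u$ genuinely belongs to $V_{0h,*}$ so that boundedness may be invoked at all.
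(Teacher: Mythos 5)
Your proof is correct and follows essentially the same route as the paper, which proves the estimate via the triangle inequality, coercivity, Galerkin orthogonality, and the asymmetric boundedness estimate (deferring the details to \cite[Lemma 3.15, Remark 3.16]{LS:LangerNeumuellerSchafelner:2019a}). Your write-up simply makes those deferred details explicit, including the correct observation that $\|\cdot\|_h \le \|\cdot\|_{h,*}$ is needed to absorb the first triangle-inequality term.
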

\begin{proof}
    Estimate (\ref{LS:eq:cea-like}) easily follows from triangle inequality and Galerkin-ortho\-go\-na\-lity; see \cite[Lemma 3.15, Remark 3.16]{LS:LangerNeumuellerSchafelner:2019a} for details.
\end{proof}
Next, we estimate the best approximation error by the interpolation error, where we have to choose a proper interpolation operator $ \mathfrak{I}_* $. 
For smooth solutions, i.e., $ u\in H^l(Q) $ with $ l > (d+1)/2 $, we obtained a localized a priori error estimate, see \cite[Theorem 3.17]{LS:LangerNeumuellerSchafelner:2019a}, where we used the standard Lagrange interpolation operator $ \mathcal{I}_h $; see e.g. \cite{LS:BrennerScott:2008a}. 
In this paper, we are interested in non-smooth solutions, which means that we only require $ u\in H^{l}(Q) $, with some real $ l > 1 $. Hence, we cannot use the Lagrange interpolator. We can, however, use so-called quasi-interpolators, e.g. Cl\'{e}ment \cite{LS:Clement:1975a} or Scott-Zhang \cite{LS:BrennerScott:2008a}. For this kind of operators, we need a neighborhood $ S_K $ of an element $ K\in\mathcal{T}_h $ which is defined as 
$ S_K := \{ K'\in\mathcal{T}_h : \overline{K}\cap\overline{K}'\neq\emptyset\}. $
Let $ u\in H^{l}(Q) $, with some real $ l>1 $, 
then, for the Scott-Zhang quasi-interpolation operator $ \mathfrak{I}_{S\!Z} : L_2(Q) \rightarrow V_{0h} $, we have the local estimate
(see e.g. \cite[(4.8.10)]{LS:BrennerScott:2008a})
\begin{equation}\label{LS:eq:quasi-interpolation}
\| v-\mathfrak{I}_{S\!Z} v \|_{H^k(K)} \leq C_{\mathfrak{I}_{S\!Z}} h_K^{l-k} |v|_{H^l(S_K)},\ k=0,1.
\end{equation}
For details on
the construction of such a quasi-interpolator, we refer to \cite{LS:BrennerScott:2008a} and the references therein. For simplicity, we now assume that the diffusion coefficient $ \nu $ is piecewise constant, i.e., $ \nu|_K = \nu_K $, for all $ K\in\mathcal{T}_h $. Then we can show the 
following lemma.
\begin{lemma} 
    \label{LS:lem:Quasi-InterpolationerrorEstimates}
    Let $ l > 1 $ and $ v\in V_0\cap H^l(\mathcal T_h) $. Then the following interpolation error estimates are valid:
    \begin{align}
    \| v-{\mathfrak{I}_{S\!Z}} v \|_{L_2(\Sigma_T)} &\leq c_1\Bigl(\sum_{\mathclap{\substack{
                K\in\mathcal{T}_h\\ \partial K\cap\Sigma_T\neq\emptyset}}}h_K^{2s-1}| v |_{H^{s}(K)}^2\Bigr)^{1/2}, \label{LNS:eq:interpolation:boundary}\\
    \| v-{\mathfrak{I}_{S\!Z}} v \|_h &\leq c_2 \Bigl(\sum_{K\in\mathcal{T}_h}h_K^{2(s-1)} |v|_{H^s(S_K)}^2\Bigr)^{1/2}, \label{LNS:eq:interpolation:h} \\
    \| v-{\mathfrak{I}_{S\!Z}} v \|_{h,*} &\leq c_3 \Bigl(\sum_{K\in\mathcal{T}_h}h_K^{2(s-1)} \bigl(|v|_{H^s(S_K)}^2 + \|\mathrm{div}_x(\nu\nabla_{x}v)\|_{L_2(K)}^2\bigr)\Bigr)^{1/2}, \label{LNS:eq:interpolation:h,*}
    \end{align}
    with $ s = \min\{l,p+1\} $ and positive constants $ c_1, c_2$ and $c_3 $, that do not depend on $ v $  or $ h_K $
    provided that $ \theta_K=\mathcal{O}(h_K) $ for all $K \in \mathcal{T}_h$. Here, $ p $ denotes the polynomial degree of the finite element shape functions 
    on the reference element.
\end{lemma}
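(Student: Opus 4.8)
The plan is to prove each of the three estimates by reducing them to the single local quasi-interpolation bound \eqref{LS:eq:quasi-interpolation}, summing the resulting local contributions, and keeping careful track of the powers of $h_K$ that arise from the definitions of the norms. I would first fix the notation $s=\min\{l,p+1\}$, noting that the Scott-Zhang operator reproduces polynomials of degree $p$, so the relevant approximation order is governed by $s$ rather than by $l$ directly; wherever \eqref{LS:eq:quasi-interpolation} is invoked one replaces $l$ by $s$ and uses the seminorm $|v|_{H^s(S_K)}$.

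For the boundary estimate \eqref{LNS:eq:interpolation:boundary}, the key idea is a scaled trace inequality on each element $K$ whose face touches $\Sigma_T$: for $w=v-\mathfrak{I}_{S\!Z}v$ one has
\[
\|w\|_{L_2(\partial K\cap\Sigma_T)}^2 \le C\bigl(h_K^{-1}\|w\|_{L_2(K)}^2 + h_K\|\nabla w\|_{L_2(K)}^2\bigr).
\]
Inserting the $k=0$ and $k=1$ cases of \eqref{LS:eq:quasi-interpolation} gives $h_K^{-1}h_K^{2s}|v|_{H^s(S_K)}^2 + h_K\,h_K^{2(s-1)}|v|_{H^s(S_K)}^2 = C\,h_K^{2s-1}|v|_{H^s(S_K)}^2$, and summing over the boundary-touching elements yields \eqref{LNS:eq:interpolation:boundary} after absorbing the overlap of the patches $S_K$ into the constant $c_1$ via shape regularity. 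For the $\|\cdot\|_h$ estimate \eqref{LNS:eq:interpolation:h}, I would expand the norm \eqref{LNS:eq:norm:h} term by term: the $L_2(\Sigma_T)$ piece is controlled by \eqref{LNS:eq:interpolation:boundary}; the term $\theta_K h_K\|\partial_t w\|_{L_2(K)}^2$ is bounded via the $k=1$ case of \eqref{LS:eq:quasi-interpolation} by $\theta_K h_K\,h_K^{2(s-1)}|v|_{H^s(S_K)}^2$, which, under the assumption $\theta_K=\mathcal{O}(h_K)$, is $C\,h_K^{2s-1}|v|_{H^s(S_K)}^2\le C\,h_K^{2(s-1)}|v|_{H^s(S_K)}^2$ on a bounded domain; and the term $\|\nabla_x w\|_{L_2^\nu(K)}^2$ is $\le \nu_K\,C\,h_K^{2(s-1)}|v|_{H^s(S_K)}^2$ using piecewise-constant $\nu$, absorbing $\nu_K$ into $c_2$.

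The estimate \eqref{LNS:eq:interpolation:h,*} for the stronger norm \eqref{LNS:eq:norm:h,*} is handled by adding the two extra terms to the analysis just carried out. The term $(\theta_K h_K)^{-1}\|w\|_{L_2(K)}^2$ is the delicate one: by the $k=0$ case of \eqref{LS:eq:quasi-interpolation} it is bounded by $(\theta_K h_K)^{-1}h_K^{2s}|v|_{H^s(S_K)}^2$, and only under $\theta_K=\mathcal{O}(h_K)$ does this collapse to $C\,h_K^{2(s-1)}|v|_{H^s(S_K)}^2$; this is precisely where the scaling hypothesis on $\theta_K$ is essential and where a naive choice of $\theta_K$ would spoil the rate. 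The remaining term $\theta_K h_K\|\mathrm{div}_x(\nu\nabla_x w)\|_{L_2(K)}^2$ cannot be treated by the interpolation bound, since $\mathfrak{I}_{S\!Z}v$ need not be smooth enough for its second spatial derivatives to be controlled; instead, since $\mathrm{div}_x(\nu\nabla_x\mathfrak{I}_{S\!Z}v)$ contributes, I would split $\mathrm{div}_x(\nu\nabla_x w)=\mathrm{div}_x(\nu\nabla_x v)-\mathrm{div}_x(\nu\nabla_x\mathfrak{I}_{S\!Z}v)$ and bound it by $\|\mathrm{div}_x(\nu\nabla_x v)\|_{L_2(K)}$ together with an inverse inequality applied to the discrete part, absorbing the factor $\theta_K h_K$ with the inverse-inequality factor $h_K^{-2}$ and again using $\theta_K=\mathcal{O}(h_K)$; this accounts for the explicit $\|\mathrm{div}_x(\nu\nabla_x v)\|_{L_2(K)}^2$ term appearing on the right-hand side of \eqref{LNS:eq:interpolation:h,*}.

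The main obstacle I anticipate is the control of the second-order term $\theta_K h_K\|\mathrm{div}_x(\nu\nabla_x w)\|_{L_2(K)}^2$, because for low-regularity $v\in H^l$ with $l$ only slightly above $1$ one cannot bound $\|\mathrm{div}_x(\nu\nabla_x v)\|_{L_2(K)}$ by an $H^s$-seminorm; this is exactly why that quantity is left standing as a separate datum on the right-hand side rather than being subsumed into $|v|_{H^s(S_K)}$, and handling the discrete contribution correctly through the inverse inequality (rather than through \eqref{LS:eq:quasi-interpolation}) is the crux of the argument. A secondary technical point is bookkeeping the finite overlap of the patches $S_K$ so that the sums over $S_K$ on the right-hand sides remain genuine (overlap-bounded) sums, which follows from shape regularity and affects only the constants $c_1,c_2,c_3$.
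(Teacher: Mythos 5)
Your treatment of \eqref{LNS:eq:interpolation:boundary}, of \eqref{LNS:eq:interpolation:h}, and of the term $(\theta_K h_K)^{-1}\|v-\mathfrak{I}_{S\!Z}v\|_{L_2(K)}^2$ in \eqref{LNS:eq:interpolation:h,*} coincides with the paper's proof: scaled trace inequality plus \eqref{LS:eq:quasi-interpolation}, term-by-term expansion of the mesh-dependent norms, and the scaling $\theta_K=\mathcal{O}(h_K)$ invoked exactly where you place it. The gap is in your last step, the discrete part of the divergence term. You propose the bare inverse inequality
\[
\|\mathrm{div}_x(\nu\nabla_x\mathfrak{I}_{S\!Z}v)\|_{L_2(K)}\le c\,h_K^{-1}\|\nu\nabla_x\mathfrak{I}_{S\!Z}v\|_{L_2(K)},
\]
and then absorb $\theta_K h_K\cdot h_K^{-2}=\mathcal{O}(1)$. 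But what survives is $\|\nu\nabla_x\mathfrak{I}_{S\!Z}v\|_{L_2(K)}^2$, which is merely bounded (by stability of the interpolant); it is not of size $h_K^{2(s-1)}|v|_{H^s(S_K)}^2$, and inserting $\nabla_x v$ afterwards does not help, since $\|\nabla_x v\|_{L_2(K)}$ is likewise of order one. As written, your argument therefore yields only an $\mathcal{O}(1)$ bound for $\|v-\mathfrak{I}_{S\!Z}v\|_{h,*}$, not the rate claimed in \eqref{LNS:eq:interpolation:h,*}.

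The missing idea --- and this is precisely what the paper does --- is to insert the piecewise-linear Scott--Zhang interpolant $\mathfrak{I}_{S\!Z}^1 v$ \emph{before} invoking the inverse inequality: since $\nu$ is piecewise constant and $\mathfrak{I}_{S\!Z}^1 v$ is linear on each element, $\mathrm{div}_x(\nu\nabla_x\mathfrak{I}_{S\!Z}^1 v)=0$, hence
\[
\mathrm{div}_x(\nu\nabla_x\mathfrak{I}_{S\!Z}v)=\mathrm{div}_x\bigl(\nu\nabla_x(\mathfrak{I}_{S\!Z}v-\mathfrak{I}_{S\!Z}^1 v)\bigr),
\]
so the inverse inequality acts on a \emph{difference of two discrete functions}. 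A triangle inequality through $\nabla_x v$ then turns $\|\nabla_x(\mathfrak{I}_{S\!Z}v-\mathfrak{I}_{S\!Z}^1 v)\|_{L_2(K)}$ into two interpolation errors, each bounded by $C\,h_K^{s-1}|v|_{H^s(S_K)}$ via \eqref{LS:eq:quasi-interpolation} (note that $s\le2$ in the low-regularity regime $l\le2$ considered here, so using the linear interpolant loses nothing), and the leftover factor $\theta_K h_K^{-1}$ is bounded by the scaling assumption. Related points you gloss over: the paper distinguishes $p=1$, where the discrete part vanishes identically and no inverse inequality is needed at all, from $p\ge2$, where the above insertion is essential; and the boundedness of the coefficient $\theta_K h_K^{3-2s}$ multiplying $\|\mathrm{div}_x(\nu\nabla_x v)\|_{L_2(K)}^2$ requires $s\le2$, a restriction the paper states explicitly ($1<l\le 2$, the case $l>2$ being covered by the smooth-solution theory) and which your bookkeeping should also record.
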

\begin{proof}
    For the first estimate, we use the scaled trace inequality and the quasi-interpolation estimate \eqref{LS:eq:quasi-interpolation} with $ k = 0,1 $
    \begin{align*}
    \| v-\mathfrak{I}_{S\!Z} v \|_{L_2(\Sigma_T)}^2 &= \sum_{\mathclap{\substack{K\in\mathcal{T}_h\\ \partial K\cap\Sigma_T\neq\emptyset}}} \| v-\mathfrak{I}_{S\!Z} v \|_{L_2(\partial K\cap\Sigma_T)}^2 \leq \sum_{\mathclap{\substack{K\in\mathcal{T}_h\\ \partial K\cap\Sigma_T\neq\emptyset}}} \| v-\mathfrak{I}_{S\!Z} v \|_{L_2(\partial K)}^2 \\
    &\leq \sum_{\mathclap{\substack{K\in\mathcal{T}_h\\ \partial K\cap\Sigma_T\neq\emptyset}}} \bigl[ c_{Tr}^2h_K^{-1}(\| v-\mathfrak{I}_{S\!Z} v \|_{L_2(K)}^2 + h_K^2 \| \nabla (v-\mathfrak{I}_{S\!Z} v) \|_{L_2(K)}^2)  \bigr] \\
    &\leq c_{Tr}^2 \sum_{\mathclap{\substack{K\in\mathcal{T}_h\\ \partial K\cap\Sigma_T\neq\emptyset}}} \bigl[C_{\mathfrak{I}_{S\!Z}}^2\ h_K^{-1}h_K^{2l} | v |_{H^{l}(S_K)}^2 +C_{\mathfrak{I}_{S\!Z}}^2\  h_K\,h_K^{2(l-1)} | v |_{H^{l}(S_K)}^2 \bigr]\\
    & \leq \max_{K\in\mathcal{T}_h}\bigl(2\,c_{Tr}^2 C_{\mathfrak I_{S\!Z}}^2\bigr)\sum_{\mathclap{\substack{K\in\mathcal{T}_h\\ \partial K\cap\Sigma_T\neq\emptyset}}} \bigl[h_K^{2l-1} | v |_{H^{l}(S_K)}^2 \bigr],
    \end{align*}
    which corresponds to \eqref{LNS:eq:interpolation:boundary} with $ c_1 = \max_{K\in\mathcal{T}_h}\bigl(2\,c_{Tr}^2 C_{\mathfrak I_{S\!Z}}^2\bigr) $.
    To show the second estimate \eqref{LNS:eq:interpolation:h}, we use definition \eqref{LNS:eq:norm:h} and that $ \nu $ is piecewise constant, the quasi-interpolation error estimate \eqref{LS:eq:quasi-interpolation} with $ k = 1 $,  and the above estimate \eqref{LNS:eq:interpolation:boundary}, and obtain 
    \begin{align*}
    \| v - &\mathfrak{I}_{S\!Z} v \|_h^2 \\
    &= \sum_{K\in\mathcal{T}_h} \Bigl[\theta_{K}h_K\,\|\partial_t (v-\mathfrak{I}_{S\!Z} v) \|_{L_2(K)}^2 + \|\nu^{1/2}\nabla_{x}(v-\mathfrak{I}_{S\!Z}v) \|_{L_2(K)}^2\Bigr]\\
    &\quad+ \frac{1}{2} \|v-\mathfrak{I}_{S\!Z}v\|_{L_2(\Sigma_T)}^2\\
    &\leq \sum_{K\in\mathcal{T}_h}\Bigl[\theta_K h_K C_{\mathfrak{I}_{S\!Z}}^2 h_K^{2(l-1)} |v|_{H^l(S_K)}^2 + {\nu}_K C_{\mathfrak{I}_{S\!Z}}^2 h_K^{2(l-1)}|v|_{H^l(S_K)}\Bigr] \\
    &\quad+ c_1 \sum_{\mathclap{\substack{K\in\mathcal{T}_h\\ \partial K\cap\Sigma_T\neq\emptyset}}} h_K^{2l-1} | v |_{H^{l}(S_K)}^2\\
    &\leq \sum_{K\in\mathcal{T}_h} \bigl(C_{\mathfrak{I}_{S\!Z}}^2(\theta_K h_K + {\nu}_K) + c_1 h_K \bigr) h_K^{2(l-1)} | v |_{H^{l}(S_K)}^2 \leq c_2 \sum_{K\in\mathcal{T}_h} h_K^{2(l-1)} | v |_{H^{l}(S_K)}^2,
    \end{align*}
    with $ c_2 = \max_{K\in\mathcal{T}_h}\bigl(C_{\mathfrak{I}_{S\!Z}}^2(\theta_K h_K + {\nu}_K) + c_1 h_K \bigr)  $.
    For the third estimate, we 
    deduce from \eqref{LNS:eq:norm:h,*} that we only have to estimate the additional sum
    \[
    \sum_{K\in\mathcal{T}_h}\Bigl[(\theta_K h_K)^{-1} \|v - \mathfrak{I}_{S\!Z} v\|_{L_2(K)} ^2 + \theta_K h_K \|\mathrm{div}_x(\nu\nabla_{x}(v-\mathfrak{I}_{S\!Z} v))\|_{L_2(K)}^2\Bigr]. 
    \]
    We start with the first $ L_2 $-term. We apply the quasi-interpolation estimate \eqref{LS:eq:quasi-interpolation} with $ k = 0 $ and obtain
    \begin{align*}
    \sum_{K\in\mathcal{T}_h}(\theta_K h_K)^{-1} \|v - \mathfrak{I}_{S\!Z} v\|_{L_2(K)} ^2 &\leq \sum_{K\in\mathcal{T}_h} (\theta_K h_K)^{-1} C_{\mathfrak{I}_{S\!Z}}^2 h_K^{2l} |v|_{H^l(S_K)}^2\\
    &\leq \sum_{K\in\mathcal{T}_h}C_{\mathfrak{I}_{S\!Z}}^2 \Bigl(\frac{h_K}{\theta_K}\Bigr) h_K^{2(l-1)} |v|_{H^l(S_K)}^2.
    \end{align*}
    Note that the term $ (h_K/\theta_K) $ is bounded for $ \theta_K = \mathcal{O}(h_K) $. For the $ L_2 $-norm of the spatial divergence, we can distinguish between two cases: linear basis functions ($ p=1 $) and higher order basis functions ($ p\geq2 $). For linear basis functions, we split the divergence of the gradient, obtaining 
    \begin{equation*}
    \|\mathrm{div}_x(\nu\nabla_{x}(v-\mathfrak{I}_{S\!Z} v))\|_{L_2(K)}^2 = 
    \|\mathrm{div}_x(\nu\nabla_{x}v) - \mathrm{div}_x(\nu\nabla_{x}(\mathfrak{I}_{S\!Z} v))\|_{L_2(K)}^2
    \end{equation*}
    for each element $ K\in\mathcal{T}_h $.
    Since $ \mathfrak{I}_{S\!Z}v $ is a linear polynomial and $ \nu $ is piecewise constant, we deduce
    \[ \mathrm{div}_x(\nu\nabla_{x}(\mathfrak{I}_{S\!Z}v)) = \nu_K\,\mathrm{div}_x(\nabla_{x}(\mathfrak{I}_{S\!Z}v)) = 0 \]
    for each element $ K\in\mathcal{T}_h $.
    Hence, we get
    \begin{align*}
    \sum_{K\in\mathcal{T}_h} \theta_K h_K \|\mathrm{div}_x(\nu\nabla_{x}v)\|_{L_2}^2 = \sum_{K\in\mathcal{T}_h} h_K^{2(l-1)} \theta_K h_K^{1-2(l-1)} \|\mathrm{div}_x(\nu\nabla_{x}v)\|_{L_2(K)}^2 ,
    \end{align*}
    where the norm is bounded since $ v \in V_0 $. Moreover, for $ \theta_K = \mathcal{O}(h_K) $, the term $ \theta_K h_K^{1-2(l-1)} $ is bounded for $ 1 \leq l \leq 2 $, and the case $ l > 2 $ is already treated in \cite{LS:LangerNeumuellerSchafelner:2019a}.
    Combining all above estimates, we obtain
    \begin{align*}
    \|v-\mathfrak{I}_{S\!Z} v\|_{h,*}^2 \leq&c_2 \sum_{K\in\mathcal{T}_h} h_K^{2(l-1)} |v|_{H^l(S_K)}^2 + \sum_{K\in\mathcal{T}_h} C_{\mathfrak{I}_{S\!Z}}^2 \Bigl(\frac{h_K}{\theta_K}\Bigr) h_K^{2(l-1)} |v|_{H^l(S_K)}^2 \\
    &+ \sum_{K\in\mathcal{T}_h} (\theta_{K}h_K^{3-2l})  h_K^{2(l-1)}\|\mathrm{div}_x(\nu\nabla_{x}v)\|_{L_2(K)}^2\\
    \leq& c_3 \sum_{K\in\mathcal{T}_h} h_K^{2(l-1)} \Bigl(|v|_{H^l(S_K)}^2 + \|\mathrm{div}_x(\nu\nabla_{x}v)\|_{L_2(K)}^2\Bigr),
    \end{align*}
    with $ c_3 = c_2 + \max_{K\in\mathcal{T}_h}\bigl\{ C_{\mathfrak{I}_{S\!Z}}^2(h_K/\theta_{K}), \theta_{K}h_K^{3-2l} \bigr\} $.
    For the general case of higher order basis functions, i.e., $ p\geq2 $, the divergence of the gradient does not vanish. First we split the divergence of the gradient and also the norms, obtaining
    \begin{align*}
    \sum_{K\in\mathcal{T}_h} \theta_K h_K &\|\mathrm{div}_x(\nu\nabla_{x}v) - \mathrm{div}_x(\nu\nabla_{x}(\mathfrak{I}_{S\!Z} v))\|_{L_2}^2 \\
    &\leq \sum_{K\in\mathcal{T}_h} 2\,\theta_K h_K \left( \|\mathrm{div}_x(\nu\nabla_{x}v)\|_{L_2}^2 +  \| \mathrm{div}_x(\nu\nabla_{x}(\mathfrak{I}_{S\!Z} v))\|_{L_2}^2 \right)
    \end{align*}
    The first term in the sum we have already estimated above, where we now replace $ l $ by $ s = \min\{l,p+1\} $, which is in our case ($ l\leq2 $) again just $ l $. For the second term, we insert $ \nu_K \mathrm{div}_x (\nabla_{x}(\mathfrak{I}_{S\!Z}^1 v)) $ into the norm, where $ \mathfrak{I}_{S\!Z}^1 v $ is the linear quasi-interpolation of $ v $. We observe that $ \mathfrak{I}_{S\!Z}^1 v - \mathfrak{I}_{S\!Z} v $ is a finite element function, i.e., we can apply the inverse equality for the $ H(\mathrm{div}_x) $-norm \cite[Lemma~3.5]{LS:LangerNeumuellerSchafelner:2019a}. Since the diffusion coefficient is piecewise constant, we obtain
    \begin{align*}
    \sum_{K\in\mathcal{T}_h} 2\,\theta_K h_K &\| \mathrm{div}_x(\nu\nabla_{x}(\mathfrak{I}_{S\!Z}^1 v - \mathfrak{I}_{S\!Z} v))\|_{L_2}^2 \\
    &\leq \sum_{K\in\mathcal{T}_h} 2\,\theta_K h_K h_K^{-2} c_{I,3}^2 \|\nu\nabla_{x}(\mathfrak{I}_{S\!Z}^1 v - \mathfrak{I}_{S\!Z} v) \|_{L_2(K)}^2  \\
    &\leq \sum_{K\in\mathcal{T}_h} 2\,\theta_K h_K^{-1} c_{I,3}^2 {\nu}_K^2 \|\nabla_{x}(\mathfrak{I}_{S\!Z}^1 v - \mathfrak{I}_{S\!Z} v) \|_{L_2(K)}^2
    \end{align*}
    Now we insert and subtract $ \nabla_{x} v $ and use the triangle inequality, which yields
    \begin{align*}
    \sum_{K\in\mathcal{T}_h} 2\,&\theta_K h_K^{-1} c_{I,3}^2 \overline{\nu}_K^2 \|\nabla_{x}(\mathfrak{I}_{S\!Z}^1 v - \mathfrak{I}_{S\!Z} v) \|_{L_2(K)}^2 \\
    &\leq \sum_{K\in\mathcal{T}_h} 4\,\theta_K h_K^{-1} c_{I,3}^2 \overline{\nu}_K^2\left( \|\nabla_{x}(v - \mathfrak{I}_{S\!Z}^1 v)\|_{L_2(K)}^2 + \| \nabla_{x}( v-  \mathfrak{I}_{S\!Z} v) \|_{L_2(K)}^2 \right).
    \end{align*}
    For both terms we can apply \eqref{LS:eq:quasi-interpolation} (quasi-interpolation) and obtain
    \begin{align*}
    \sum_{K\in\mathcal{T}_h}& 4\,\theta_K h_K^{-1} c_{I,3}^2{\nu}_K^2 \Bigl(  \|\nabla_{x}(v - \mathfrak{I}_{S\!Z}^1 v)\|_{L_2(K)}^2 + \| \nabla_{x}( v-  \mathfrak{I}_{S\!Z} v) \|_{L_2(K)}^2 \Bigr)\\
    &\leq \sum_{K\in\mathcal{T}_h} 4\,\theta_K h_K^{-1} c_{I,3}^2{\nu}_K^2 \Bigl( C_{\mathfrak{I}_{S\!Z}}^2 h_K^{2(l-1)} |v|_{H^l(S_K)}^2 + C_{\mathfrak{I}_{S\!Z}}^2 h_K^{2(l-1)}| v|_{H^l(S_K)}^2 \Bigr)\\
    &\leq \sum_{K\in\mathcal{T}_h} {8\,\theta_K h_K^{-1} c_{I,3}^2{\nu}_K^2 C_{\mathfrak{I}_{S\!Z}}^2}
    h_K^{2(l-1)} |v|_{H^l(S_K)}^2.
    \end{align*}
    Combining all of the above, 
    \begin{align*}
    \|v-&\mathfrak{I}_{S\!Z} v\|_{h,*}^2 \\
    &\leq c_2 \sum_{K\in\mathcal{T}_h} h_K^{2(l-1)} |v|_{H^l(S_K)} + \sum_{K\in\mathcal{T}_h} 2(\theta_{K}h_K^{3-2l}) h_K^{2(l-1)}\|\mathrm{div}_x(\nu\nabla_{x}v)\|_{L_2(K)}^2  \\
    &\qquad+ \sum_{K\in\mathcal{T}_h} \bigl({8\,\theta_K h_K^{-1} c_{I,3}^2{\nu}_K^2 C_{\mathfrak{I}_{S\!Z}}^2}\bigr) h_K^{2(l-1)}|v|_{H^l(S_K)}^2\\
    &\leq c_{3,p} \sum_{K\in\mathcal{T}_h} h_K^{2(l-1)} \left(|v|_{H^l(S_K)}^2 + \|\mathrm{div}_x(\nu\nabla_{x}v)\|_{L_2(K)}^2\right),
    \end{align*}
    where $ c_{3,p} = c_2 + \max_{K\in\mathcal{T}_h}\bigl\{{8\,\theta_K h_K^{-1} c_{I,3}^2{\nu}_K^2 C_{\mathfrak{I}_{S\!Z}}^2},2(\theta_{K}h_K^{3-2l})  \bigr\}$ .
\end{proof}
Now we are in the position to prove our main theorem.
\begin{theorem}
    Let $ p $ be the polynomial degree used, and let $ u\in H^{l}(Q)\cap V_{0} $, with $ l>1 $, be the exact solution,
    and $ u_h\in V_{0h} $ be the approximate solution of the finite element scheme \eqref{eq:fe-scheme}. Furthermore, let the assumptions of Lemma~\ref{LS:lem:cea-like} (C\'{e}a-like estimate) and \ref{LS:lem:Quasi-InterpolationerrorEstimates} (quasi-interpolation estimates) hold.
    Then the a priori discretization error estimate
    \begin{equation}\label{LS:eq:discretization-error}
    \|u-u_h\|_h \le C\,\Bigl(\sum_{K\in\mathcal{T}_h} h_K^{2(s-1)}\bigl(|u|_{H^s(S_K)} + \|\mathrm{div}_x(\nu\nabla_{x}u)\|_{L_2(K)}^2\bigr) \Bigr)^{1/2},
    \end{equation}
    holds, 
    with $ s=\min\{l,p+1\} $ and a positive generic constant $ C $.
\end{theorem}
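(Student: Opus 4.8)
The plan is to chain the two preceding lemmas, since all the genuine analysis has already been carried out in them and the theorem is essentially their composition. First I would invoke the C\'ea-like estimate of Lemma~\ref{LS:lem:cea-like}, whose hypotheses (coercivity with constant $\mu_c$, boundedness with constant $\mu_b$, and $u\in H^{\mathcal{L},1}_{0,\underline{0}}(\mathcal{T}_h)=V_0$) are all supplied: the assumption $u\in H^l(Q)\cap V_0$ gives $u\in V_0$ directly. This bounds the discretization error in the weaker norm by the best-approximation error in the stronger norm,
\[
\|u-u_h\|_h\leq\Bigl(1+\frac{\mu_b}{\mu_c}\Bigr)\inf_{v_h\in V_{0h}}\|u-v_h\|_{h,*}.
\]

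Next I would discard the infimum in favour of the single admissible competitor $v_h=\mathfrak{I}_{S\!Z}u$. This is legitimate because $\mathfrak{I}_{S\!Z}\colon L_2(Q)\to V_{0h}$ maps into $V_{0h}$ by construction, so the quasi-interpolant is a valid test/trial function that respects the homogeneous conditions on $\Sigma\cup\Sigma_0$. Before applying the last estimate of Lemma~\ref{LS:lem:Quasi-InterpolationerrorEstimates}, I would check that its hypothesis $v\in V_0\cap H^l(\mathcal{T}_h)$ holds for $v=u$: global regularity $u\in H^l(Q)$ implies elementwise regularity $u\in H^l(\mathcal{T}_h)$, and $u\in V_0$ is assumed, so both conditions are met. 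Then estimate \eqref{LNS:eq:interpolation:h,*} yields
\[
\|u-\mathfrak{I}_{S\!Z}u\|_{h,*}\leq c_3\Bigl(\sum_{K\in\mathcal{T}_h}h_K^{2(s-1)}\bigl(|u|_{H^s(S_K)}^2+\|\mathrm{div}_x(\nu\nabla_{x}u)\|_{L_2(K)}^2\bigr)\Bigr)^{1/2},
\]
with $s=\min\{l,p+1\}$, provided $\theta_K=\mathcal{O}(h_K)$, which is exactly the regime in which $\mu_c$ is robust. Combining the two displayed inequalities gives \eqref{LS:eq:discretization-error} with generic constant $C=(1+\mu_b/\mu_c)\,c_3$.

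I do not expect a genuine obstacle here, since the heavy lifting—the coercivity/boundedness in the mesh-dependent norms and the three scaled interpolation bounds—was done earlier. The only points demanding care are bookkeeping ones: confirming the admissibility of $\mathfrak{I}_{S\!Z}u\in V_{0h}$, verifying that the two lemmas share a compatible regularity class so that $u$ simultaneously satisfies both sets of hypotheses, and tracking that the parameter choice $\theta_K=\mathcal{O}(h_K)$ keeps all the $h_K/\theta_K$ and $\theta_K h_K^{3-2l}$ factors appearing in Lemma~\ref{LS:lem:Quasi-InterpolationerrorEstimates} bounded uniformly in the mesh. With those checks in place, the estimate follows by a direct substitution and a relabelling of constants.
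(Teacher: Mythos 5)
Your proposal is correct and follows exactly the paper's own argument: the C\'ea-like estimate of Lemma~\ref{LS:lem:cea-like}, with the infimum bounded by the competitor $v_h=\mathfrak{I}_{S\!Z}u$, followed by the quasi-interpolation estimate \eqref{LNS:eq:interpolation:h,*}, yielding $C=c_3(1+\mu_b/\mu_c)$. Your additional hypothesis checks (admissibility of $\mathfrak{I}_{S\!Z}u\in V_{0h}$, compatibility of the regularity classes, boundedness of the $\theta_K$-dependent factors) are sound bookkeeping that the paper leaves implicit.
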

\begin{proof}
    Choosing the quasi-interpolant $ v_h = \mathfrak{I}_{S\!Z}u $ 
    in \eqref{LS:eq:cea-like}, we can apply the quasi-interpolation estimate  \eqref{LNS:eq:interpolation:h,*} to obtain
    \begin{align*}
    \|u-u_h\|_h &\leq \Bigl(1+\frac{\mu_b}{\mu_c}\Bigr) \|u-\mathfrak I_{S\!Z}u\|_{h,*}\\
    &\le c_3\Bigl(1+\frac{\mu_b}{\mu_c}\Bigr) \Bigl(\sum_{K\in\mathcal{T}_h}h_K^{2(s-1)} \bigl(|u|_{H^s(S_K)}^2 + \|\mathrm{div}_x(\nu\nabla_{x}u)\|_{L_2(K)}^2\bigr)\Bigr)^{1/2}.
    \end{align*}
    We set $ C = c_3(1+\mu_b/\mu_c) $ to obtain \eqref{LS:eq:discretization-error}, which closes the proof.
\end{proof}
%

%
\section{Numerical results}\label{sec:num}
%
We implemented the space-time finite element scheme \eqref{eq:fe-scheme} in \texttt{C++}, where we used the finite element library MFEM\footnote{\url{http://mfem.org/}} and the solver library \textit{hypre}\footnote{\url{https://www.llnl.gov/casc/hypre/}}. The linear system was solved by means of the GMRES method, preconditioned by the algebraic multigrid \textit{BoomerAMG}. We stopped the iterative procedure if the initial residual was reduced by a factor of $ 10^{-8} $. Both libraries are already fully parallelized with the Message Passing Interface (MPI). Therefore, we performed all numerical tests on the distributed memory cluster RADON1\footnote{\url{https://www.ricam.oeaw.ac.at/hpc/}}
in Linz. For each element $ K\in\mathcal{T}_h $, we choose $ \theta_{K} = h_K/(\tilde{c}^2 \overline{\nu}_K) $, where $ \tilde{c} $ is computed by solving a local generalized eigenvalue problem which comes from an inverse inequality, see \cite{LS:LangerNeumuellerSchafelner:2019a} for further details.
\subsection{Example: Highly oscillatory solution}\label{sec:num:1}
As first test example, 
we consider the unit (hyper-)cube $ Q = (0,1)^{d+1} $, with $ d=2,3 $, as space-time cylinder, 
and
$ \nu\equiv1 $. 
The manufactured function
\[ u(x,t) = \sin\!\left(\frac{1}{\frac{1}{10\pi}+\sqrt{\sum_{i=1}^{d}x_i^2 + t^2}}\right) \]
serves as the exact solution, where we compute the right hand side $ f $ accordingly. This solution is 
highly oscillatory. Hence, we do not expect optimal rates for uniform refinement in the pre-asymptotic range. However, using adaptive refinement, we may be able to recover the optimal rates.
We used the residual based error indicator proposed by Steinbach and Yang in \cite{LS:SteinbachYang:2018a}. For each element $ K\in\mathcal{T}_h $, we compute
\[ \eta_K^2 := h_K^2 \|R_h(u_h)\|_{L_2(K)}^2 + h_K \|J_h(u_h) \|_{L_2(\partial K)}^2, \]
where $ u_h $ is the solution of 
\eqref{eq:fe-scheme}, $ R_h(u_h) := f + \mathrm{div}_x(\nu \nabla_x u_h)-\partial_t u_h $ in $K$, and $ J_h(u_h) := [\nu \nabla_x u_h]_e$ on $ e\subset\partial K$, with $ [\,.\,]_e $ denoting the jump across one face $ e\subset \partial K $. 
We mark each element where the condition $ \eta_K \geq \sigma \max_{K\in\mathcal{T}_h}  \eta_K $ is fulfilled, with $ \sigma $ an a priori chosen threshold, e.g., $ \sigma = 0.5 $. Note that $ \sigma = 0 $ results in uniform refinement. In Figure~\ref{fig:example}, we observe indeed reduced convergence rates for all polynomial degrees and dimensions tested. However, using an adaptive procedure, we are able to recover the optimal rates. Moreover, we significantly reduce the number of dofs required to reach a certain error threshold. For instance, in the case $ d=3 $ and $ p=2 $, we need $ 276\,922\,881 $ dofs to get an error in the $ \|\,.\,\|_h $-norm of $ \sim10^{-1} $, whereas we only need $ 26\,359 $ dofs with adaptive refinement. In terms of runtime, the uniform refinement needed $ 478.57 s $ for assembling and solving, while the complete adaptive procedure took $ 156.5 s $ only. 
The parallel performance 
is also shown in Figure~\ref{fig:example}, where we obtain a nice strong scaling up to 64 cores. Then the local problems are too small (only $ \sim 10\,000 $ dofs for 128 cores) and the communication overhead becomes too large. 
\begin{figure}[ht]
    \centering%
    \begin{minipage}{0.5\textwidth}%
        \centering%
        \includegraphics[width=\textwidth]{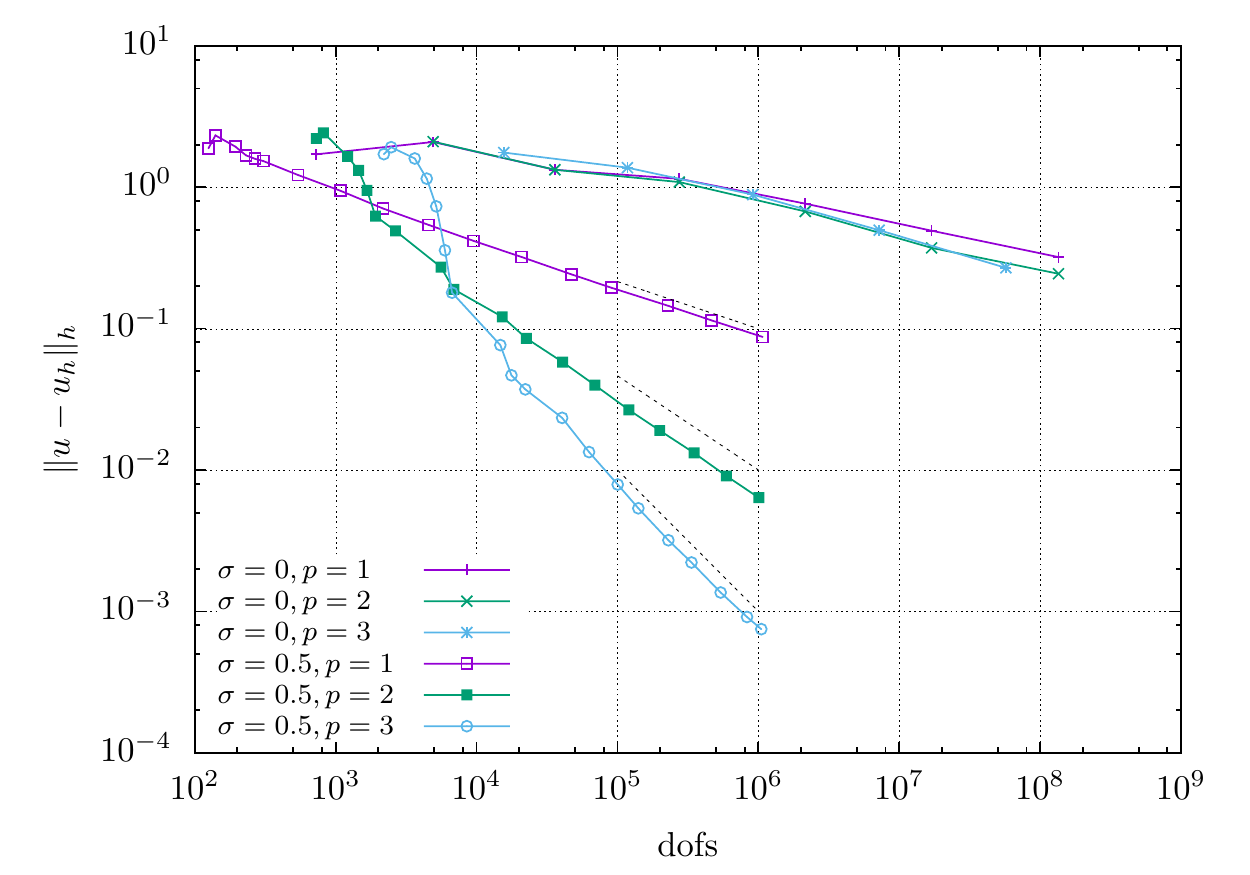}
    \end{minipage}%
    \hfill%
    \begin{minipage}{0.5\textwidth}%
        \centering%
        \includegraphics[width=\textwidth]{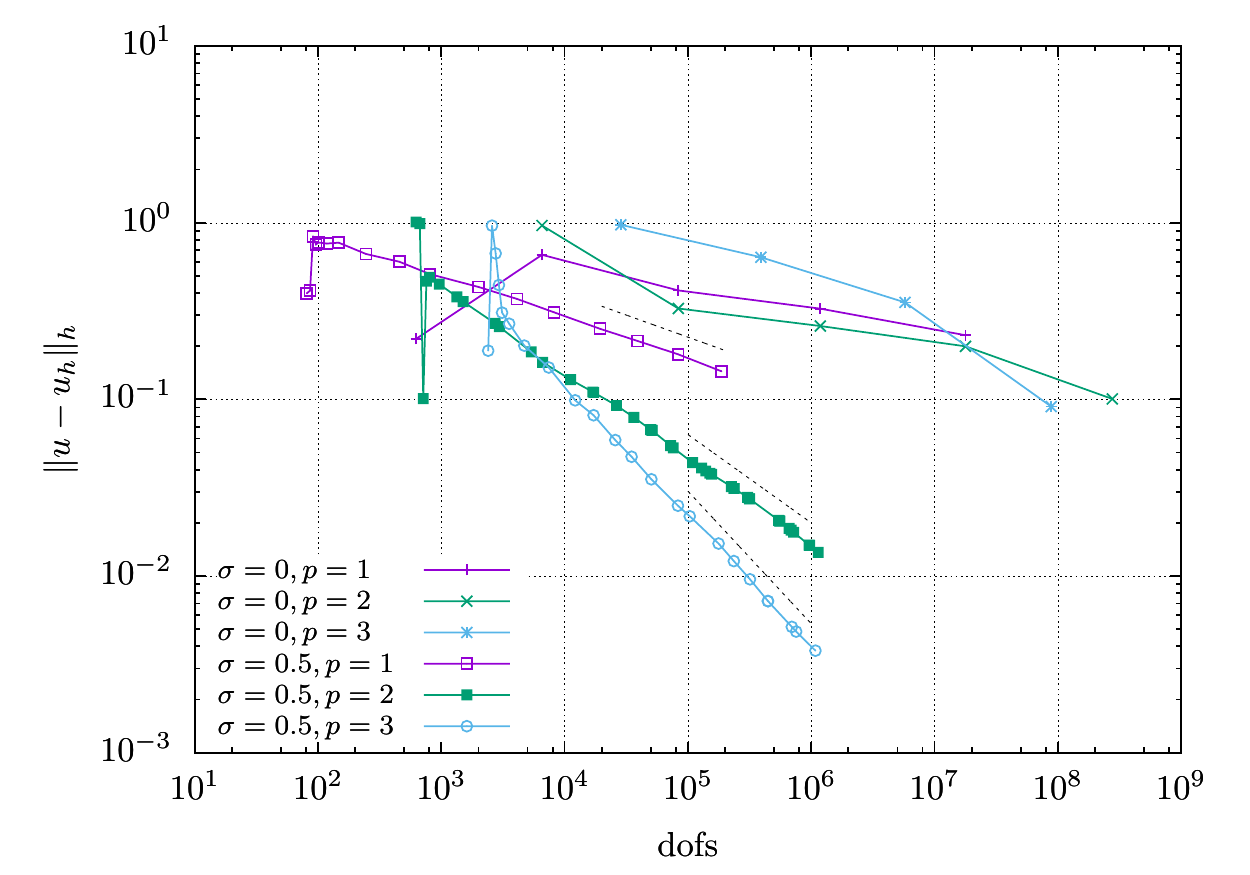}
    \end{minipage}%
    \vfill%
    \begin{minipage}{0.5\textwidth}%
        \centering%
        \includegraphics[width=.6\textwidth]{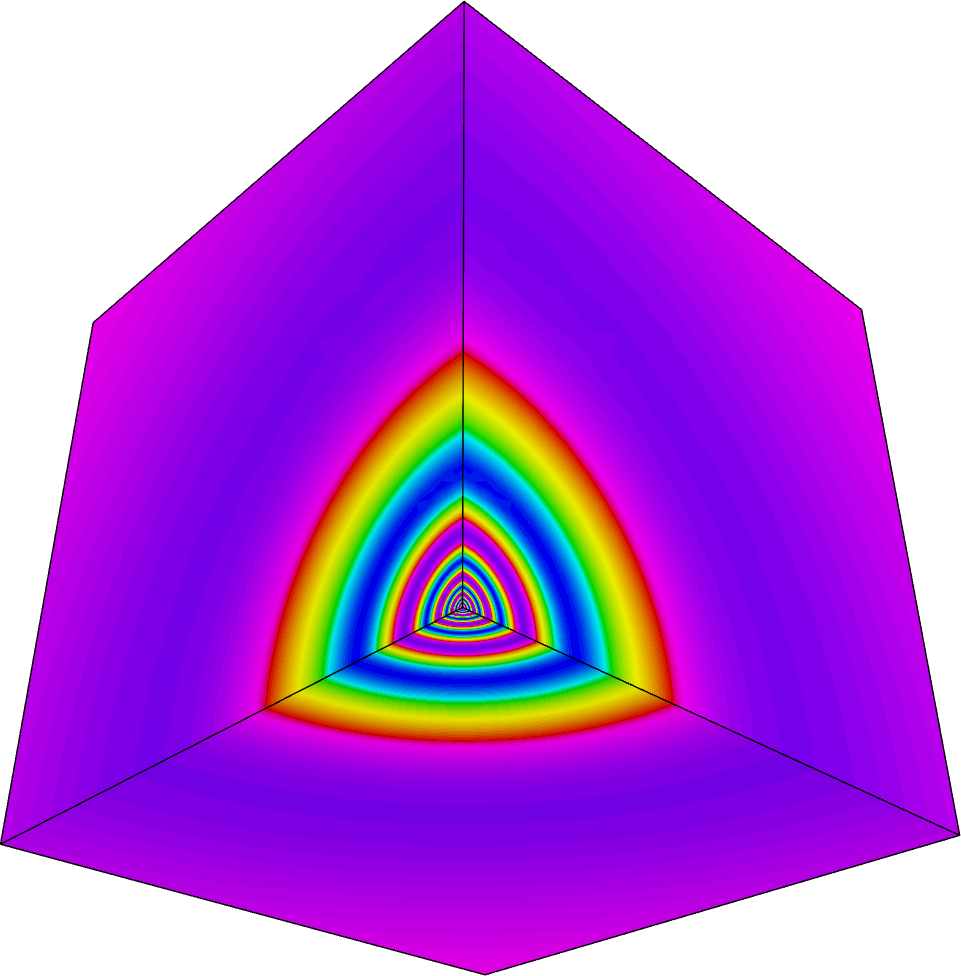}
    \end{minipage}%
    \hfill%
    \begin{minipage}{0.5\textwidth}%
        \centering%
        \includegraphics[width=\textwidth]{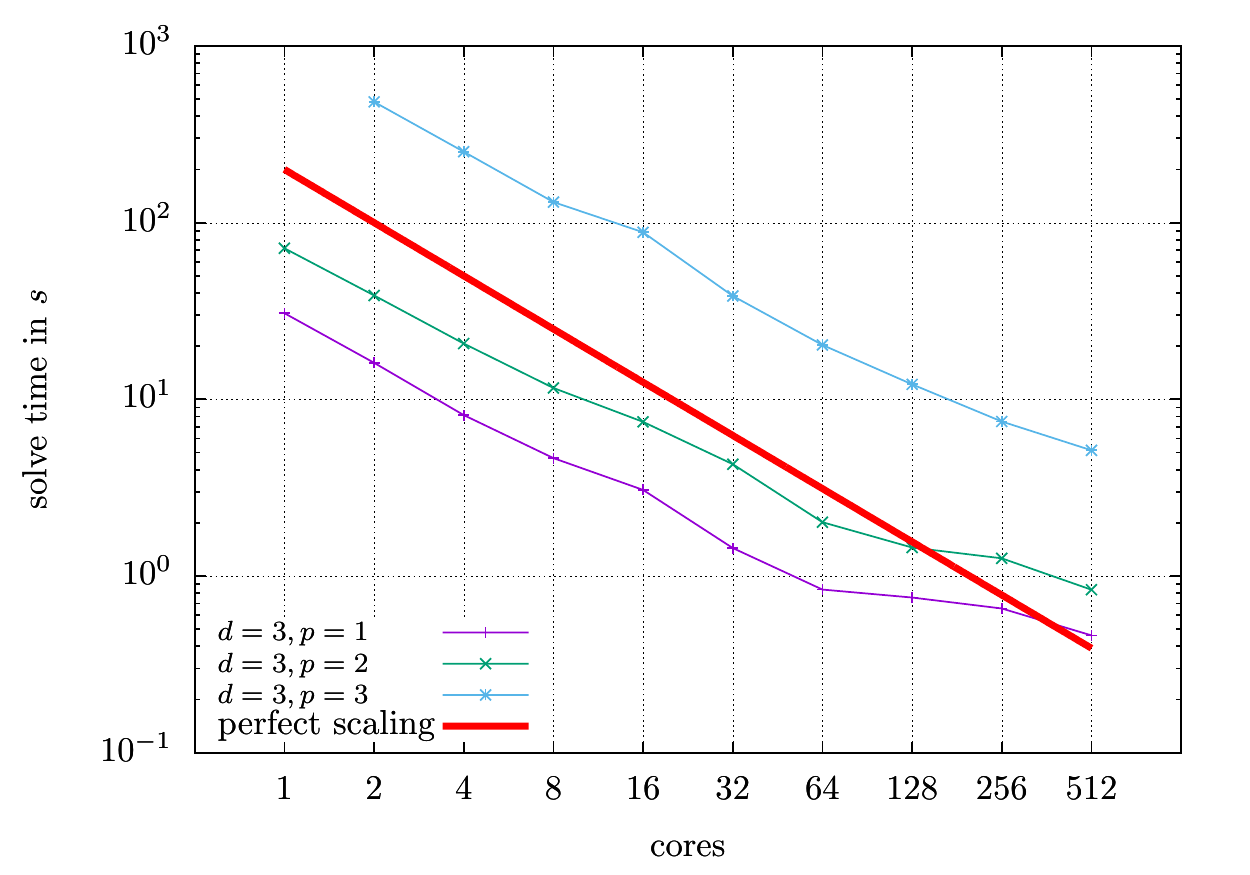}
    \end{minipage}%
    \caption{Example~\ref{sec:num:1}: Error rates in the $ \|\,.\,\|_h $-norm for $ d=2 $ (upper left) and $ d=3 $ (upper right), the dotted lines indicate the optimal rate; Plot of the approximate solution $ u_h $ centered at the origin $ (0,0,0) $ \cite{LS:LangerNeumuellerSchafelner:2019a} (lower left); Strong scaling on a mesh with $ 1\,185\,921 $ dofs for $ p=1,2 $ and $ 5\,764\,801 $ dofs for $ p=3 $ (lower right).}
    \label{fig:example}
\end{figure}
\subsection{Example: Moving peak}\label{sec:num:2}
For the second example, we consider the unit-cube $ Q = (0,1)^3 $, i.e. $ d=2 $. As diffusion coefficient, the choice $ \nu\equiv 1 $ is made. We choose the function
\[ u(x,t) = (x_1^2-x_1)(x_2^2-x_2)e^{-100((x_1-t)^2+(x_2-t)^2)}, \]
as exact solution and compute all data accordingly. This function is smooth, and almost zero everywhere, except in a small region around the line from the origin $ (0,0,0) $ to $ (1,1,1) $. This motivates again the use of an adaptive method. We use the residual based indicator $ \eta_K $ introduced in Example~\ref{sec:num:1}. In Figure~\ref{fig:example2}, we can observe that we indeed obtain optimal rates for both uniform and adaptive refinement. However, using the a posteriori error indicator, we reduce the number of dofs needed to reach a certain threshold by one order of magnitude. For instance, in the case $ p=2 $, we need $ 16\,974\,593 $ dofs to obtain an error of $ \sim 7\times10^{-5} $ with uniform refinement. Using adaptive refinement, we need $ 1\,066\,777 $ dofs only.
\begin{figure}[ht]
    \centering%
    \begin{minipage}{0.5\textwidth}%
        \centering%
        \includegraphics[width=\textwidth]{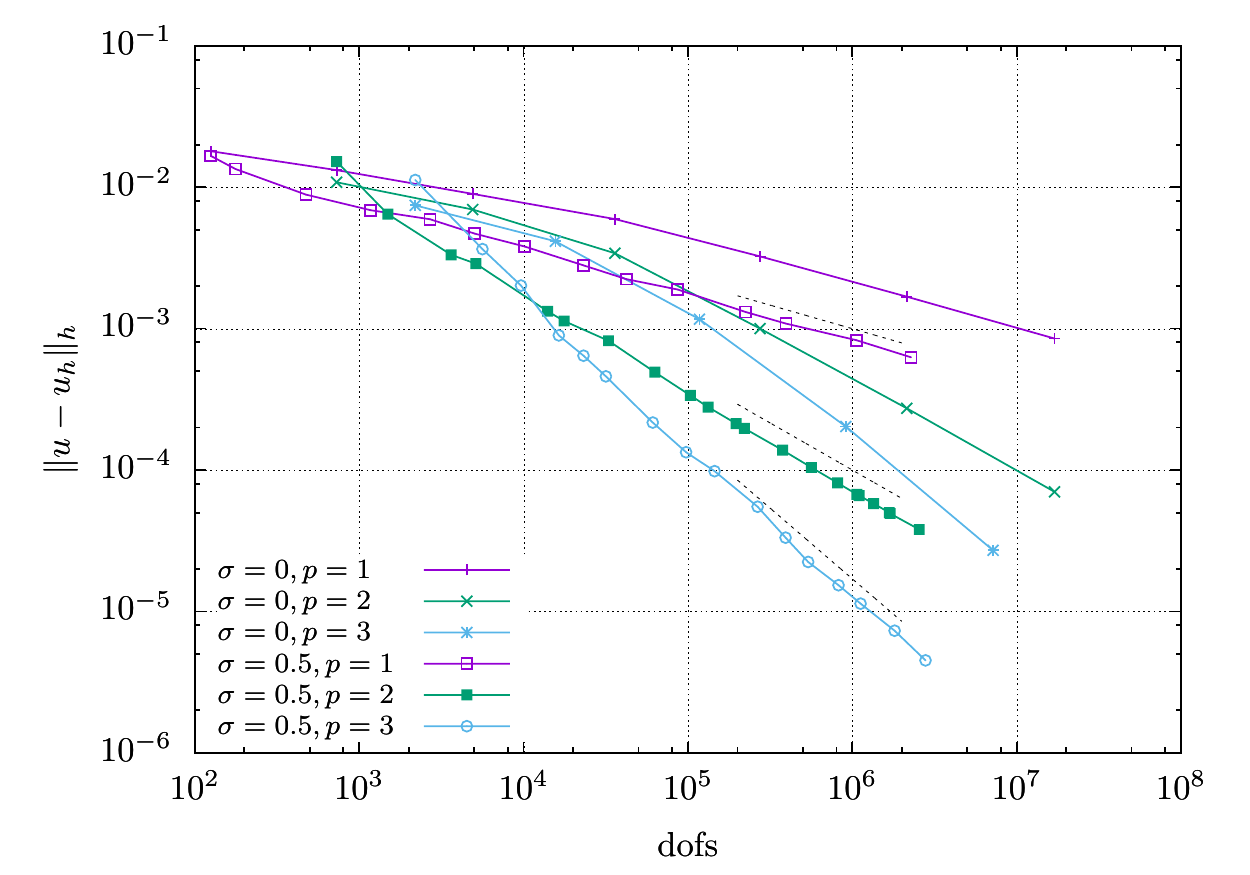}
    \end{minipage}%
    \hfill%
    \begin{minipage}{0.5\textwidth}%
        \centering%
        \includegraphics[width=.65\textwidth]{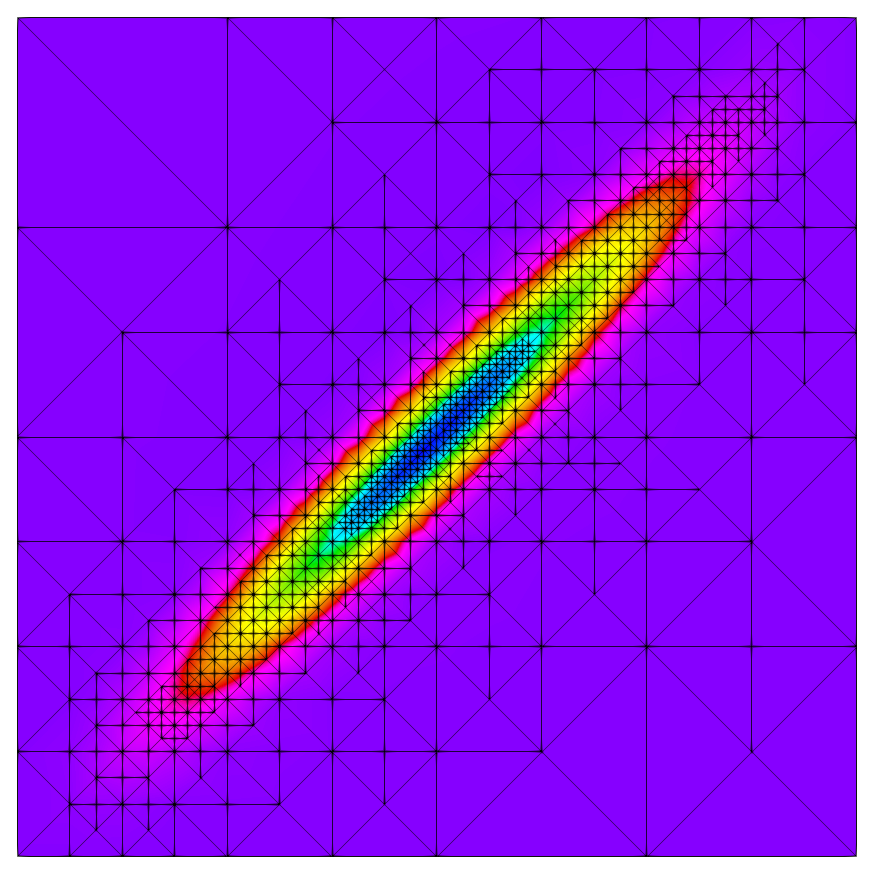}
    \end{minipage}%
    \caption{Example~\ref{sec:num:2}: Error rates in the $ \|\,.\,\|_h $-norm for $ d=2 $, the dotted lines indicate the optimal rates (left); Diagonal cut through the space-time mesh $ \mathcal{T}_h $ along the line from $ (0,0,0) $ to $ (1,1,1) $ after 8 adaptive refinements (right). }%
    \label{fig:example2}
\end{figure}
%
\section{Conclusions}\label{sec:conc}
%
Following \cite{LS:LangerNeumuellerSchafelner:2019a}, we introduced a space-time finite element solver for non-auto\-no\-mous parabolic evolution problems on completely unstructured simplicial meshes. We only assumed that we have so-called maximal parabolic regularity, i.e., the PDE is well posed in $ L_2 $. We note that this property is only required locally in order to derive a consistent space-time finite element scheme. We extended the a priori error estimate in the mesh-dependent energy norm to the case of non-smooth solutions, i.e. $ u\in H^{1+\epsilon}(Q) $, with $ 0 < \epsilon \le 1 $. This is necessary, since
we cannot expect a smooth solution, especially when dealing with discontinuous diffusion coefficients. For simplicity, we considered only piecewise constant diffusion coefficients. The extension to piecewise smooth coefficients is straight-forward, but more technical. In comparison to the previous result for sufficiently smooth solutions, we no longer have a completely localized estimate. We also have to include the neighborhood of an element $ K\in\mathcal{T}_h $. This may not be sufficient if we have to deal with solutions that have different regularity in different subdomains of the whole space-time cylinder. In applications, these subdomains correspond, for instance, to different materials. However, Duan et al. \cite{LS:DuanLiTanZheng:2012a} have shown a quasi-interpolation estimate that fits into this setting. \\
We performed two numerical examples with known solutions. The first example had a highly oscillatory solution, and the second one was almost zero everywhere except along a line through the space-time cylinder. Using a high-performance cluster, we solved both problems on a sequence of uniformly refined meshes, where we also obtained good strong scaling rates. In order to reduce the computational cost, we also applied an adaptive procedure, using a residual based error indicator. Indeed, using a simultaneously in space and time adaptive procedure reduced the computational as well as the memory cost by a large factor. Moreover, we could observe that, especially for $ d=3 $, the AMG preconditioned GMRES method solves the problem quite efficiently. \\
%
%
\bibliographystyle{siam}
\bibliography{references}
%
\end{document}